\theoremstyle{plain}
\newtheorem{theorem}{Theorem}
\newtheorem{lemma}[theorem]{Lemma}
\newtheorem*{corollary*}{Corollary}
\newtheorem*{conjecture*}{Conjecture}
\theoremstyle{definition}
\theoremstyle{remark}
\newcommand{\SC}{{\mathbb C}}  \newcommand{\SD}{{\mathbb D}}  \newcommand{\SN}{{\mathbb N}}
\newcommand {\SR}{{\mathbb R}}    \newcommand{\SZ}{{\mathbb Z}}
  \newcommand{\ve}{\varepsilon}  
\def\t{\theta}  \newcommand{\vt}{\vartheta}  
  \newcommand{\vp}{\varphi}
\newcommand{\be}{\begin{equation}}
\newcommand{\ee}{\end{equation}}
\newcommand{\bea}{\begin{eqnarray}}
\newcommand{\eea}{\end{eqnarray}}
\begin{document}

\title{On the failure of Bombieri's conjecture for univalent functions}
\author{Iason Efraimidis}
\subjclass[2010]{26D05, 30C10, 30C50, 30C70}

\keywords{Univalent functions, Bombieri conjecture, Dieudonn\'e criterion}
\address{Departamento de Matem\'aticas, Universidad Aut\'onoma de Madrid, 28049 Madrid, Spain.}
\address{Facultad de Matem\'aticas, Pontificia Universidad Cat\'olica de Chile, Santiago, Chile.}  
\email{iason.efraimidis@mat.uc.cl}
 
\maketitle

\begin{abstract} 
A conjecture of Bombieri \cite{Bom} states that the coefficients of a normalized univalent function $f$ should satisfy
$$
\liminf_{f\to K} \frac{n-{\rm Re\,}a_n}{m-{\rm Re\,}a_m} = \min_{t\in\SR} \, \frac{n\sin t -\sin(nt)}{m\sin t -\sin(mt)}, 
$$
when $f$ approaches the Koebe function $K(z)=\frac{z}{(1-z)^2}$. Recently, Leung \cite{Le} disproved this conjecture for $n=2$ and for all $m\geq3$ and, also, for $n=3$ and for all odd $m\geq5$. Complementing his work we disprove it for all $m>n\geq2$ which are simultaneously odd or even and, also, for the case when $m$ is odd, $n$ is even and $n\leq \frac{m+1}{2}$. We mostly make use of trigonometry, but also employ Dieudonn\'e's criterion for the univalence of polynomials. 
\end{abstract}

\section{Introduction}
Let $S$ denote the class of analytic functions 
$$
f(z) = z +a_2z^2+a_3z^3 +\ldots +a_nz^n +\ldots
$$
which are univalent in the unit disk $\SD = \{z\in\SC : |z|<1 \}$. Throughout the long history of this class one of the motivating forces has been the Bieberbach conjecture, now de Branges' Theorem \cite{dB}, which states that $|a_n|\leq n$ and that the only extremal function is the Koebe function 
$$
K(z) \, = \, \frac{z}{(1-z)^2} \, = \, \sum_{n=1}^\infty n z^n
$$
and its rotations. 

Long before the final solution by de Branges, efforts of many mathematicians culminated in the proof of the local Bieberbach conjecture in an article of Bombieri \cite{Bom}. This weaker conjecture states that $|a_n|\leq n$ for functions in $S$ in a neighborhood of the Koebe function. In the same article, Bombieri conjectured that the numbers 
\be \label{sigma}
\sigma_{mn} = \liminf_{f\to K} \frac{n-{\rm Re\,}a_n}{m-{\rm Re\,}a_m},
\ee
usually referred to as the \emph{Bombieri numbers}, should coincide with the \emph{trigonometric numbers} 
$$
B_{mn} = \min_{t\in\SR} \, \frac{n\sin t -\sin(nt)}{m\sin t -\sin(mt)},
$$
for all $m,n\geq2$. We note that the lower limit in \eqref{sigma} refers to functions $f$ in the class $S$ approaching the Koebe function uniformly on compacta. 

In \cite{PR}, Prokhorov and Roth showed that $\sigma_{mn} \leq B_{mn}$. Also, the local maximum property of the Koebe function yields that $\sigma_{mn}\geq 0$. Setting 
\be \label{A}
A_n(t) = n- \frac{\sin(nt)}{\sin t}, \quad t\in\SR, \;\, n\in\SN,
\ee
it is relatively simple to see that $B_{mn} = 0$ when $m$ is even and $n$ is odd, since in that case $A_n(\pi) = 0 < A_m(\pi)$. Hence $\sigma_{mn} = B_{mn} = 0$ and Bombieri's conjecture is correct when $m$ is even and $n$ is odd. Also, Bshouty and Hengartner \cite{BH} showed that the conjecture is true for analytic variations of the Koebe function and for functions with real coefficients (a simpler proof of the latter appeared in \cite{PR}). Some related results are given in the recent article \cite{AB}. 

The Bombieri conjecture was first disproved by Greiner and Roth \cite{GR} in the case $(m,n) = (3,2)$. They explicitly computed 
$$
\sigma_{32} = \frac{e-1}{4e} < \frac{1}{4} = B_{32}. 
$$ 
Proofs (disproving the conjecture) for the points $(2,4), (3,4)$ and $(4,2)$ were then furnished by Prokhorov and Vasil'ev \cite{PV}, who computed (approximately) the corresponding Bombieri numbers.

Recently, Leung \cite{Le} developed a variational method which allowed him to show that $\sigma_{m2} < B_{m2}$ for all $m\geq3$ and that $\sigma_{m3} < B_{m3}$ for all odd $m\geq5$. He used the \emph{linear} version of Loewner's differential equation
\be \label{Loew}
\frac{\partial f}{\partial t} = z\frac{\partial f}{\partial z} \frac{1+\kappa(t)z}{1-\kappa(t)z},
\ee
whose solutions are \emph{chains} of univalent functions $f(z,t)=e^t (z+a_2(t)z^2 +\ldots), \linebreak t\geq 0$. Any one-slit function in $S$ can be seen as the initial value $f(z)=f(z ,0)$ of such a solution (see \cite{P75}). The \emph{drive function} $\kappa$ has the form $\kappa(t) = e^{i\vt(t)}$, with $\vt$ being real-valued and piecewise continuous on $[0,\infty)$. In the special case when $\kappa\equiv-1$ we get the chain $f(z,t)=e^tK(z)$. Setting $\kappa(t) = -e^{i\ve \vt(t)}$, for $\ve>0$ and some admissible $\vt$ and letting $t=0$, Leung obtained from \eqref{Loew} a variation of Koebe's function, given by
\be \label{var}
f(z)= K(z) +\ve v(z) +\ve^2 q(z) +O(\ve^3),
\ee
for some analytic functions $v$ and $q$ which depend only on the choice of $\vt$. This way Leung re-derived in a simpler fashion the exact same second variation $q$ as Bombieri, who used the \emph{non-linear} version of Loewner's equation. Thus Bombieri's formula (4.1) in \cite{Bom} was obtained by Leung as formula (2.17) in \cite{Le}.

In terms of the coefficients, formula \eqref{var} yields 
$$
a_n = n +\ve v_n +\ve^2 q_n + O(\ve^3).
$$
It is an innate property of the method that the coefficients $v_n$ are purely imaginary and $q_n$ are real. Therefore, 
$$
n-{\rm Re\,}a_n = -\ve^2 q_n + O(\ve^3).
$$
Leung's choice of $\vt$ yields  
\be \label{q_n}
q_n = -\frac{4}{9}(n-1)(2n^2 -4n+3).
\ee
(For the convenience of the reader we have included at the end of the article an appendix where it is shown how, beginning from Bombieri's second variational formula, one can arrive at this number $q_n$.) Hence, 
$$
\sigma_{mn} \leq \lim_{\ve\to0^+} \frac{-\ve^2 q_n + O(\ve^3)}{-\ve^2 q_m + O(\ve^3)} = \frac{q_n}{q_m},
$$
for all $m,n\geq2$. Note that 
$$
\frac{q_n}{q_m} =  \frac{(n-1)(2n^2-4n+3)}{(m-1)(2m^2-4m+3)} <  \frac{n^3-n}{m^3-m} 
$$
for all $m > n\geq2$ since 
$$
\vp(n) = \frac{2n^2-4n+3}{n(n+1)}
$$
increases. Indeed, 
$$
\vp'(x) = \frac{3(2x^2-2x-1)}{x^2(x+1)^2} > 0, \qquad \text{for} \quad x>\frac{1+\sqrt3}{2}\approx 1,366.
$$
Therefore, to disprove Bombieri's conjecture for some $m > n\geq2$, it suffices to show that 
\be \label{B}
B_{mn} = \frac{n^3-n}{m^3-m}. 
\ee
Leung showed that formula \eqref{B} holds true for $n=2$ and for all $m\geq3$ and, also, for $n=3$ and for all odd $m\geq5$. Here, it is our purpose to prove \eqref{B} in some other cases, including the ones just mentioned. In particular, we will prove the following theorem. 

\begin{theorem} \label{Bmn}
Let $m>n\geq2$ be integers such that either \\ 
(a) both $m$ and $n$ are odd, or \\ 
(b) both $m$ and $n$ are even, or \\ 
(c) $m$ is odd, $n$ is even and $n\leq \frac{m+1}{2}$.\\
Then \eqref{B} is true.
\end{theorem}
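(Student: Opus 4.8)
The plan is to recast \eqref{B} as a single trigonometric inequality and then exploit the parity hypotheses. With $A_k(t)=k-\frac{\sin(kt)}{\sin t}$ as in \eqref{A}, the quotient under the minimum defining $B_{mn}$ is exactly $A_n(t)/A_m(t)$, and the expansion $A_k(t)=\frac{k^3-k}{6}\,t^2+O(t^4)$ shows that $A_n(t)/A_m(t)\to(n^3-n)/(m^3-m)$ as $t\to0$. Thus \eqref{B} is equivalent to the statement that this quotient attains its minimum in the limit $t\to0$, i.e.
\[
\frac{A_n(t)}{A_m(t)}\ \ge\ \frac{n^3-n}{m^3-m},\qquad 0<t<\pi,
\]
where by evenness and $2\pi$-periodicity it suffices to consider $(0,\pi)$. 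A product-to-sum identity gives $A_k(t)=2\sum_{j=0}^{k-1}\sin^2\!\big(\tfrac{(2j-k+1)t}{2}\big)\ge0$, which after collecting equal frequencies reads $A_k(t)=4\sum_{j=1}^{(k-1)/2}\sin^2(jt)$ for odd $k$ and $A_k(t)=4\sum_{i=1}^{k/2}\sin^2\!\big(\tfrac{(2i-1)t}{2}\big)$ for even $k$; in particular $A_m(t)>0$ on $(0,\pi)$ in each of the cases (a)--(c), and a short separate check of the endpoint $t=\pi$ (trivial when $m,n$ share parity, and giving $+\infty$ when $m$ is odd and $n$ even) confirms no smaller value occurs there. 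Writing $d_k=k^3-k$, the displayed inequality is then $d_mA_n(t)\ge d_nA_m(t)$, equivalently (clearing $\sin t$) the nonnegativity on $[0,\pi]$ of
\[
V(t)=mn(m^2-n^2)\sin t-(m^3-m)\sin(nt)+(n^3-n)\sin(mt).
\]

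The engine for cases (a) and (b) is the telescoping recursion $A_{k+2}(t)-A_k(t)=4\sin^2\!\big(\tfrac{(k+1)t}{2}\big)$, immediate from $\sin((k+2)t)-\sin(kt)=2\cos((k+1)t)\sin t$. Chaining $n\to n+2\to\cdots\to m$ inside one parity class, the inequality $d_mA_n\ge d_nA_m$ follows by transitivity once the single step $d_{k+2}A_k(t)\ge d_kA_{k+2}(t)$ is known; since $d_{k+2}-d_k=6(k+1)^2$, this step simplifies to the tight bound
\[
A_k(t)\ \ge\ \frac{2k(k-1)}{3(k+1)}\,\sin^2\!\Big(\tfrac{(k+1)t}{2}\Big),
\]
an equality to second order at $t=0$ whose base cases $k=2,3$ reduce to $|\sin(3x)|\le3|\sin x|$ and $\cos^2 t\le1$. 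For the mixed case (c) parity-preserving chaining is unavailable, but the hypothesis $n\le\frac{m+1}{2}$ is precisely $2n-1\le m$; since $2n-1$ is odd I would first establish the bridging inequality $d_{2n-1}A_n(t)\ge d_nA_{2n-1}(t)$ for even $n$, and then apply case (a) to chain from the odd index $2n-1$ up to the odd index $m$. In the sum-of-squares variables the bridge becomes $\sum_{i=1}^{n/2}\sin^2\!\big(\tfrac{(2i-1)t}{2}\big)\ge\frac{n+1}{4(2n-1)}\sum_{j=1}^{n-1}\sin^2(jt)$, again an identity at $t=0$.

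The main obstacle is exactly these tight inequalities, which carry no slack at $t=0$ and for which the naive arguments collapse: a one-step induction on the step bound would require $(k+1)|\sin(kt)|\ge k|\sin((k+1)t)|$, which is false (take $t=\pi/k$), so the parities must enter essentially. I would prove each by passing to $x=\cos t$: writing $\sin(kt)/\sin t=U_{k-1}(x)$ and $\cos((k+1)t)=T_{k+1}(x)$ for the Chebyshev polynomials of the second and first kind, the step bound becomes the nonnegativity on $[-1,1]$ of an explicit polynomial vanishing to second order at $x=1$. The symmetry $t\mapsto\pi-t$ (under which $V$ is invariant when $m,n$ are both odd) halves the work in case (a), and product-to-sum factorizations isolate the offending high-frequency term. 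Where these elementary estimates look least tractable---most notably the mixed-parity bridge in (c)---I expect to certify the required positivity through Dieudonn\'e's criterion, applying it to an auxiliary polynomial whose univalence encodes the inequality via the very factors $\sin(k\theta)/\sin\theta$ that figure both in the criterion and in $A_k(\theta)$. Verifying that this certificate indeed closes the bridge, uniformly in $n$, is the step I expect to be the crux.
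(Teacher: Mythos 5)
Your reduction of \eqref{B} to the inequality $A_n(t)/A_m(t)\ge (n^3-n)/(m^3-m)$ on $(0,\pi)$, the chaining in steps of two within a parity class, and the bridge $n\to 2n-1$ for case (c) (using that $n\le\frac{m+1}{2}$ is $2n-1\le m$) reproduce exactly the architecture of the paper's proof; your single-step bound $A_k(t)\ge\frac{2k(k-1)}{3(k+1)}\sin^2\!\big(\frac{(k+1)t}{2}\big)$ is, via the telescoping identity $A_{k+2}-A_k=4\sin^2\!\big(\frac{(k+1)t}{2}\big)$, an equivalent and rather clean restatement of Lemma \ref{lem+2}. The genuine gap is that neither of the two inequalities carrying all the mathematical weight is actually proved. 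For the single step you verify only $k=2,3$ and then defer to ``nonnegativity on $[-1,1]$ of an explicit polynomial'' in $x=\cos t$, without exhibiting that polynomial or any positivity certificate; as you yourself note, the termwise estimate $(k+1)|\sin(kt)|\ge k|\sin((k+1)t)|$ that would make this routine is false, so something nontrivial must replace it. The paper supplies it by a critical-point analysis of $\Phi(t)=2(N^2-1)\sin t-3N\sin(Nt)\cos t+(N^2+2)\cos(Nt)\sin t$ (with $N=k+1$): at any critical point either $\sin(Nt/2)=0$, where $\Phi=3N^2\sin t>0$, or $\tan(Nt/2)=\frac{N}{2}\tan t$, where $\Phi=(N^2-4)\sin t\,\big(1-\cos^2(Nt/2)\big)\ge0$, plus a separate check at $t=\pi/2$. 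Nothing of comparable substance appears in your sketch, and it is not evident that a uniform-in-$k$ Chebyshev positivity argument is any easier.

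The same applies to the case (c) bridge $4(2n-1)A_n(t)\ge(n+1)A_{2n-1}(t)$: you correctly guess that Dieudonn\'e's criterion is the tool, but you never write down the auxiliary polynomial nor prove its univalence, and you explicitly flag this as the step you ``expect to be the crux.'' In the paper the certificate is $f(z)=z-\frac{4}{3n-1}z^n+\frac{n+1}{(2n-1)(3n-1)}z^{2n-1}$, whose coefficients are chosen so that Dieudonn\'e's associated polynomials are precisely the expressions in \eqref{pos}; univalence is obtained by showing $f$ is starlike, namely that $f(z)/z$ has no zeros in $\overline{\SD}$ and that ${\rm Re}\,(zf'(z)/f(z))\ge0$ on $|z|=1$, the latter reducing to the identity $2n(\cos\theta-1)\big[(3n^2-2n+1)\cos\theta-2(2n-1)\big]+3n(n-1)^2\sin^2\theta=n(n+1)(3n-1)(\cos\theta-1)^2\ge0$. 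Until you supply proofs of these two lemmas, what you have is a correct plan --- coinciding with the paper's --- rather than a proof.
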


We have already observed that one can deduce the following corollary. 

\begin{corollary*}
Let $m>n\geq2$ be integers such that either (a), (b) or (c) in \linebreak Theorem \ref{Bmn} holds. Then Bombieri's conjecture for this pair of integers is false. 
\end{corollary*}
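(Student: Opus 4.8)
The plan is to obtain the corollary as an immediate consequence of Theorem \ref{Bmn} together with the chain of inequalities already assembled in the introduction; no new analytic input is required, only the correct concatenation of a non-strict and a strict inequality.

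Bombieri's conjecture for a fixed pair $m>n\geq2$ asserts precisely the equality $\sigma_{mn}=B_{mn}$, so to falsify it I would produce the \emph{strict} inequality $\sigma_{mn}<B_{mn}$. The upper bound on the Bombieri number is already in hand: Leung's variation \eqref{var}, evaluated through \eqref{q_n}, gives
$$
\sigma_{mn} \leq \frac{q_n}{q_m} = \frac{(n-1)(2n^2-4n+3)}{(m-1)(2m^2-4m+3)},
$$
valid for every $m,n\geq2$. On the other side, whenever the pair $(m,n)$ satisfies one of the hypotheses (a), (b) or (c), Theorem \ref{Bmn} identifies the trigonometric number exactly as $B_{mn}=\frac{n^3-n}{m^3-m}$.

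It then remains to compare these two quantities. Writing $n^3-n=(n-1)n(n+1)$ and $m^3-m=(m-1)m(m+1)$ and cancelling the common factors $(n-1)$ and $(m-1)$, one finds
$$
\frac{q_n}{q_m}\Big/\frac{n^3-n}{m^3-m} = \frac{\vp(n)}{\vp(m)}, \qquad \vp(x)=\frac{2x^2-4x+3}{x(x+1)}.
$$
Since $\vp$ is strictly increasing on $[2,\infty)$ --- its derivative was computed in the introduction to be $\vp'(x)=\frac{3(2x^2-2x-1)}{x^2(x+1)^2}>0$ for $x>\frac{1+\sqrt3}{2}$ --- and $n<m$, we obtain $\vp(n)<\vp(m)$, hence $q_n/q_m<(n^3-n)/(m^3-m)$. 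Combining the three steps yields
$$
\sigma_{mn} \leq \frac{q_n}{q_m} < \frac{n^3-n}{m^3-m} = B_{mn},
$$
so $\sigma_{mn}<B_{mn}$ and the conjectured equality fails for this pair.

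The only point demanding care is the \emph{strictness} of the middle inequality: mere closeness of $q_n/q_m$ to $B_{mn}$ would not refute the conjecture, and it is exactly the strict monotonicity $\vp(n)<\vp(m)$ for $m>n\geq2$ (both arguments exceeding $\frac{1+\sqrt3}{2}$) that delivers it. Since this is already available and Theorem \ref{Bmn} supplies the exact value of $B_{mn}$ in cases (a), (b) and (c), no genuine obstacle remains; the corollary is simply the concatenation of these facts, which is why it was announced as something already deducible from the preceding discussion.
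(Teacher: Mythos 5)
Your proposal is correct and follows exactly the paper's intended argument: the corollary is stated without a separate proof precisely because it is the concatenation, already assembled in the introduction, of Leung's bound $\sigma_{mn}\leq q_n/q_m$, the strict inequality $q_n/q_m < \frac{n^3-n}{m^3-m}$ coming from the monotonicity of $\vp(x)=\frac{2x^2-4x+3}{x(x+1)}$, and the identification $B_{mn}=\frac{n^3-n}{m^3-m}$ supplied by Theorem \ref{Bmn}. You correctly isolate the one delicate point, the strictness of the middle inequality, which is exactly what the paper's remark ``to disprove Bombieri's conjecture \ldots it suffices to show \eqref{B}'' relies on.
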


Theorem \ref{Bmn} will be proved mainly with the use of trigonometry, but also, in the case when the hypothesis (c) holds, we will employ Dieudonn\'e's criterion for univalent polynomials. 

After carefully examining the relevant graphs for $2\leq n \leq 80$ using the www.desmos.com/calculator software, one is lead to believe that the hypothesis (c) in Theorem \ref{Bmn} can be notably weakened in that the point $(m,n)$ has to be below the straight line that joins the points $(7,6)$ and $(17,14)$. Thus, the following proposition should be true. 

\begin{conjecture*}
If $m>n\geq2$ are integers such that $m$ is odd, $n$ is even and $n< \frac{4m+2}{5}$ then \eqref{B} is true.
\end{conjecture*}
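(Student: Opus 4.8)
The plan is to turn \eqref{B} into one trigonometric positivity statement and then exploit the Chebyshev structure of the functions $A_k$ in \eqref{A}. By periodicity and evenness it is enough to work on $[0,\pi]$, where $A_n$ and $A_m$ are strictly positive on $(0,\pi)$. Since $A_k(t)=\frac{k^3-k}{6}t^2+O(t^4)$ as $t\to0$, the ratio $A_n(t)/A_m(t)$ tends to $(n^3-n)/(m^3-m)$; hence \eqref{B} is equivalent to the assertion that this limiting value is the global minimum, i.e.
\[
\Phi(t):=(m^3-m)A_n(t)-(n^3-n)A_m(t)\ge0,\qquad t\in(0,\pi).
\]
Writing $x=\cos t$ and $\frac{\sin kt}{\sin t}=U_{k-1}(x)$ for the Chebyshev polynomial of the second kind, this becomes the polynomial inequality
\[
\widetilde\Phi(x):=(n^3-n)U_{m-1}(x)-(m^3-m)U_{n-1}(x)+mn(m^2-n^2)\ge0,\qquad x\in[-1,1].
\]
I would first record the boundary values $\widetilde\Phi(1)=0$ and $\widetilde\Phi(-1)=2mn(m^2-1)>0$, together with the local expansion $\widetilde\Phi(x)=\frac{(m^3-m)(n^3-n)(m^2-n^2)}{30}(1-x)^2+O((1-x)^3)$. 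Thus $x=1$ is a forced double root and the only boundary candidate for the minimum; the whole content of the statement is that it is a global minimum on $[-1,1]$.

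It is worth noting that $\Phi\ge0$ on $(0,\pi)$ is precisely the statement that $P(z)=mn(m^2-n^2)z-(m^3-m)z^n+(n^3-n)z^m$ is typically real in $\SD$: this polynomial has real coefficients, $P'(0)=mn(m^2-n^2)>0$, and $\mathrm{Im}\,P(e^{it})=\sin t\,\Phi(t)$, so positivity on the upper semicircle propagates inward by the minimum principle. This is what links the problem to Dieudonn\'e's criterion. In the sub-range $n\le\frac{m+1}{2}$ of Theorem \ref{Bmn}(c) one checks that $P$ is univalent in $\SD$ (its derivative is zero-free there, the transition occurring exactly at $n=\frac{m+1}{2}$), and then typical realness, hence $\Phi\ge0$, follows at once because a univalent function with real coefficients is typically real. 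The difficulty for the conjecture is that this univalence genuinely fails once $n>\frac{m+1}{2}$: already for $(m,n)=(5,4)$ the derivative $P'$ has a conjugate pair of zeros inside $\SD$. So in the wider range $n<\frac{4m+2}{5}$ one must prove $\widetilde\Phi\ge0$ directly rather than deduce it from univalence.

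For the direct argument I would factor out the forced double root, writing $\widetilde\Phi(x)=(1-x)^2Q(x)$ with $\deg Q=m-3$, and prove $Q\ge0$ on $[-1,1]$ by a two-region split. Near $x=1$ the leading coefficient above keeps $Q$ bounded below by a positive constant. On the complementary region I would feed in the squared-sine representations $A_n(t)=4\sum_{i=1}^{n/2}\sin^2\frac{(2i-1)t}{2}$ (valid since $n$ is even) and $A_m(t)=4\sum_{j=1}^{(m-1)/2}\sin^2(jt)$ (valid since $m$ is odd), which exhibit $\Phi$ as a difference of positive sums, and combine them with the sharp bound $|U_{k-1}|\le k$ and with refined estimates near the zeros $t=\pi j/m$ of $A_m$, where the two Chebyshev polynomials compete most strongly. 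The hypothesis $n<\frac{4m+2}{5}$ should enter exactly as the quantitative balance that keeps $(m^3-m)U_{n-1}(x)-(n^3-n)U_{m-1}(x)$ below $mn(m^2-n^2)$ throughout the interior.

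I expect this interior regime to be the main obstacle. The empirical observation that the admissible region is bounded by the straight line through $(7,6)$ and $(17,14)$, which is exactly $n=\frac{4m+2}{5}$, signals that the bound is essentially sharp, so no lossy estimate can succeed; in particular the global use of $|U_{k-1}|\le k$ is already too crude, since it only yields the trivial $n\le1$. One is therefore forced to compare $U_{m-1}$ and $U_{n-1}$ to leading and subleading order simultaneously, to locate the worst interior point of $\widetilde\Phi$, and to track it as $m,n\to\infty$ along the critical line. Controlling that genuine extremal point, rather than a convenient but suboptimal surrogate, is the crux that separates the conjecture from the already proven case (c).
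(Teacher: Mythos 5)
You should be aware at the outset that the statement you set out to prove is posed in the paper as an \emph{open conjecture}: the author offers only graphical evidence for $2\le n\le 80$ and no proof, so your attempt has to stand entirely on its own --- and it does not, because it is a program rather than a proof. Your reductions are sound: restricting to $[0,\pi]$, the equivalence of \eqref{B} with $\Phi(t)=(m^3-m)A_n(t)-(n^3-n)A_m(t)\ge0$ on $(0,\pi)$ (correct, since for $m$ odd and $n$ even one has $\vp_{mn}(\pi)=+\infty$, so the minimum can only be the removable-singularity value at $t=0$), the Chebyshev form of $\widetilde\Phi$, the boundary values $\widetilde\Phi(1)=0$ and $\widetilde\Phi(-1)=2mn(m^2-1)$, the forced double root at $x=1$ (although the coefficient of $(1-x)^2$ is $\widetilde\Phi''(1)/2=\frac{(m^3-m)(n^3-n)(m^2-n^2)}{60}$, half of what you wrote), and the identity ${\rm Im}\,P(e^{it})=\sin t\,\Phi(t)$. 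Your check that for $(m,n)=(5,4)$ the derivative $P'$ has a conjugate pair of zeros in $\SD$ is also correct ($P'$ is proportional to $5z^4-8z^3+3=(z-1)(5z^3-3z^2-3z-3)$, and the complex pair has modulus roughly $0.66$), and it rightly explains why a univalence argument cannot reach beyond $n\le\frac{m+1}{2}$. Two cautions on your account of case (c), though: a zero-free derivative does not imply univalence, so that parenthetical is not a valid mechanism; and at $m=2n-1$ your polynomial $P$, suitably normalized, is exactly the paper's polynomial \eqref{Bran}, whose \emph{starlikeness} (proved via ${\rm Re}\,(zf'/f)\ge0$, after Dieudonn\'e's criterion converts \eqref{pos} into univalence of \eqref{Bran}) is what actually closes that case.

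The genuine gap is that the entire content of the conjecture --- positivity of $\widetilde\Phi$ in the interior of $[-1,1]$ --- is never argued. The squared-sine representations and the bound $|U_{k-1}|\le k$ are, as you yourself concede, too lossy to use; the ``refined estimates near the zeros $t=\pi j/m$'' are named but never formulated; and the hypothesis $n<\frac{4m+2}{5}$ never enters a single inequality --- you only say it ``should enter.'' Since the constraint is empirically sharp (the boundary line through $(7,6)$ and $(17,14)$ is precisely $n=\frac{4m+2}{5}$), any successful estimate must be exact to two orders at the genuine interior extremal point, and nothing in the proposal produces or even locates that point. You also missed the one structural reduction the paper hands you for free: Lemma~\ref{lem+2} lets you decrease $m$ by $2$ at fixed $n$, so it suffices to prove $\Phi\ge0$ for the \emph{minimal} admissible odd $m$ for each even $n$ (roughly $m_0\approx\frac{5n-2}{4}$), collapsing the two-parameter region to a one-parameter family along the critical line --- this is exactly how the paper reduces case (c) to $m_0=2n-1$ and \eqref{Bran}, and it is the natural starting point for any attack here. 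As it stands, your proposal frames the problem correctly and diagnoses its difficulty accurately, but it establishes nothing beyond what Theorem~\ref{Bmn}(c) already gives, and the conjecture remains open.
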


\section{Auxiliary lemmas}

We first mention a criterion for the univalence of polynomials found by Dieudonn\'e \cite{D31} (see also \cite[p.75]{Du2}).

\begin{lemma}[Dieudonn\'e's criterion] \label{Dieu}
The polynomial $p(z) = z+a_2z^2 + \ldots + a_nz^n$ is univalent in $\SD$ if and only if its associated polynomials
$$
q(z;t) \, = \, 1 + a_2 \frac{\sin(2t)}{\sin t} z +\ldots + a_n \frac{\sin(nt)}{\sin t} z^{n-1}
$$
have no zeros in $\SD$ for any choice of the parameter $t\in [0,\pi]$.
\end{lemma}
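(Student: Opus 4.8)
The plan is to derive both implications from a single algebraic identity. Writing $z_1 = ze^{it}$ and $z_2 = ze^{-it}$ for $z\in\SC$ and $t\in\SR$, a direct computation using $z_1^k - z_2^k = z^k(e^{ikt}-e^{-ikt}) = 2iz^k\sin(kt)$ gives
\be \label{identity}
\frac{p(z_1)-p(z_2)}{z_1-z_2} \,=\, \sum_{k=1}^{n} a_k z^{k-1}\frac{\sin(kt)}{\sin t} \,=\, q(z;t),
\ee
with $a_1=1$, where at the endpoints one reads this as a limit, so that $q(z;0)=p'(z)$ and $q(z;\pi)=p'(-z)$. The whole argument consists in exploiting \eqref{identity} to translate coincidences $p(z_1)=p(z_2)$ into zeros of $q$.

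First I would establish necessity. Assuming $p$ univalent in $\SD$, fix $z\in\SD$ and $t\in(0,\pi)$; then $z_1$ and $z_2$ lie in $\SD$ and are distinct whenever $z\neq0$, so $p(z_1)\neq p(z_2)$ and \eqref{identity} forces $q(z;t)\neq0$, while the case $z=0$ is clear since $q(0;t)=1$. For the endpoints one uses that a univalent analytic function has non-vanishing derivative, whence $q(z;0)=p'(z)\neq0$ and $q(z;\pi)=p'(-z)\neq0$ for every $z\in\SD$. This direction requires essentially no further work.

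The substance lies in sufficiency, and this is the step I expect to be the main obstacle. Assume $q(z;t)\neq0$ for all $z\in\SD$ and $t\in[0,\pi]$, and suppose toward a contradiction that $p$ is not univalent in $\SD$. Let $r_0 = \sup\{r : p \text{ is univalent in } |z|<r\}$, so that $p$ is univalent in $|z|<r_0$ while $r_0<1$. The key claim, which I would prove by an open-mapping and local-homeomorphism argument, is that at the critical radius either $p'$ vanishes at some point with $|z|=r_0$, or there are two distinct points $z_1,z_2$ with $|z_1|=|z_2|=r_0$ and $p(z_1)=p(z_2)$. The delicate point is ruling out a coincidence in which only one of the two points lies on $|z|=r_0$: here one uses that $p$ is open and a local homeomorphism at any interior point, its derivative being non-zero in $|z|<r_0$ by the univalence already proved there, so that a coincidence between a boundary point and a strictly interior point would propagate to one between two interior points, contradicting univalence in $|z|<r_0$.

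Once the claim is in hand, the conclusion is immediate. In the first case $q(z_0;0)=p'(z_0)=0$ with $z_0\in\SD$, contradicting the hypothesis. In the second case I write $z_1=r_0e^{i\theta_1}$ and $z_2=r_0e^{i\theta_2}$ and set $z=r_0e^{i(\theta_1+\theta_2)/2}$ and $t=(\theta_1-\theta_2)/2$, which may be taken in $(0,\pi)$ after possibly interchanging $z_1$ and $z_2$, using that $q$ is even in $t$; then $z\in\SD$, $z_1=ze^{it}$, $z_2=ze^{-it}$, and \eqref{identity} yields $q(z;t)=0$, again a contradiction. Hence $r_0\geq1$ and $p$ is univalent in $\SD$.
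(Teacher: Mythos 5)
The paper offers no proof of this lemma: it is quoted as a known result, with references to Dieudonn\'e's original article and to Duren's book, so there is no in-paper argument to compare against. Your proof is correct. The identity $\bigl(p(ze^{it})-p(ze^{-it})\bigr)/(ze^{it}-ze^{-it})=q(z;t)$ is exactly the classical starting point, and your necessity argument, including the endpoint identifications $q(z;0)=p'(z)$ and $q(z;\pi)=p'(-z)$ and the observation $q(0;t)=1$, is the standard one. Where you genuinely depart from the textbook route is in sufficiency: the hypothesis says precisely that $p$ is injective on every circle $|z|=r<1$ (and that $p'\neq0$), and the classical proof then invokes the argument-principle fact that a function analytic on $\overline{\{|z|\le r\}}$ and injective on the circle $|z|=r$ is univalent in $|z|<r$, which finishes immediately and does not even need the endpoint cases $t\in\{0,\pi\}$. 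Your critical-radius argument is a valid, more topological substitute: the compactness extraction of a limiting coincidence pair $(a,b)$ with $|a|,|b|\le r_0$ yields either $a=b$ (whence $p'(a)=0$, and $|a|=r_0$ since $p'$ cannot vanish where $p$ is univalent, contradicting $q(\cdot;0)\neq0$) or $a\neq b$, and the only sub-case needing care is $|a|<r_0=|b|$. Your ``propagation'' sketch for that sub-case is right but terse; the clean way to say it is that $p(V)$ is an open neighborhood of $p(a)=p(b)$ for a small disk $V\ni a$ contained in $\{|z|<r_0\}\setminus\{b\}$, while points $u$ with $|u|<r_0$ approaching $b$ satisfy $p(u)\to p(b)$, so eventually $p(u)\in p(V)$ and one obtains two distinct preimages inside $|z|<r_0$, a contradiction. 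With that detail filled in, your proof is complete; it trades the argument principle for an elementary open-mapping argument, at the cost of the limiting case analysis.
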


\vskip0.2cm
We now prove a simple lemma for $A_n(t) = n- \frac{\sin(nt)}{\sin t}$, which we defined in \eqref{A}.
\begin{lemma} \label{lemA}
For all $t\in\SR$ and $n\geq 2$, we have
$$
A_n(t) \geq 0 \qquad \text{and} \qquad A_n(2\pi-t) = A_n(t).
$$
Also, $A_n$ vanishes only for $t=2\ell\pi, \ell\in\SZ$, when $n$ is even and only for $t=\ell\pi$, $\ell\in\SZ$, when $n$ is odd. 
\end{lemma}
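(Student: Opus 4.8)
The symmetry relation is the routine part and I would dispatch it first by direct computation. Since $n$ is an integer we have $\sin\big(n(2\pi-t)\big)=\sin(2n\pi-nt)=-\sin(nt)$ and $\sin(2\pi-t)=-\sin t$, so the two sign changes cancel in the quotient and $A_n(2\pi-t)=A_n(t)$ follows immediately for every $t$ with $\sin t\neq0$, hence everywhere by continuity.

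For the inequality the plan is to rewrite $A_n$ as a manifestly nonnegative sum. The key tool is the elementary identity
$$
\frac{\sin(nt)}{\sin t} \, = \, \sum_{k=0}^{n-1}\cos\big((n-1-2k)t\big),
$$
which I would obtain by summing the geometric series $\sum_{k=0}^{n-1}e^{i(n-1-2k)t}$ (an alternative is a short induction on $n$). Subtracting from $n=\sum_{k=0}^{n-1}1$ then gives
$$
A_n(t) \, = \, \sum_{k=0}^{n-1}\Big(1-\cos\big((n-1-2k)t\big)\Big) \, = \, 2\sum_{k=0}^{n-1}\sin^2\!\Big(\tfrac{(n-1-2k)t}{2}\Big) \, \geq \, 0 .
$$
Because the right-hand side is a genuine trigonometric polynomial in $t$, it also supplies the value of $A_n$ at the multiples of $\pi$ where $\sin t$ vanishes, so the removable singularities of the original quotient cause no difficulty and the bound $A_n(t)\geq0$ holds for all $t\in\SR$.

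It remains to locate the zeros. From the sum-of-squares form, $A_n(t)=0$ forces every summand to vanish, i.e. $(n-1-2k)\,t\in 2\pi\SZ$ for each $k=0,\dots,n-1$, and conversely this condition makes $A_n(t)=0$. Here I would carry out the parity bookkeeping, which is the only point requiring care. The exponents $n-1-2k$ all share the parity of $n-1$. When $n$ is even they are odd and include $\pm1$, so the binding constraint is $t\in 2\pi\SZ$, giving the zeros $t=2\ell\pi$; when $n$ is odd they are even, the exponent $0$ contributes nothing, and the smallest nonzero one is $\pm2$, so the binding constraint is $2t\in2\pi\SZ$, giving the zeros $t=\ell\pi$. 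In each case one checks that the selected $t$ indeed kill all the remaining (integer-multiple) terms, completing the description.

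I do not expect a substantive obstacle in this lemma; the work is essentially the identity above together with correct case analysis. The most error-prone step is the parity discussion of the zero set, where one must verify both that the stated $t$ are zeros and that no others occur, and keep track of the distinction between the even and odd cases.
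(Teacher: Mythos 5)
Your proof is correct, but it follows a genuinely different route from the one the paper actually writes out. The paper's proof argues via calculus: after disposing of the endpoints with L'Hospital's rule, it sets $\vp(t)=n\sin t-\sin(nt)$ on $(0,\pi)$ and shows that at any critical point $t_0$ one has $\cos t_0=\cos(nt_0)$, hence $\sin t_0=\pm\sin(nt_0)$ and $\vp(t_0)=(n\mp1)\sin t_0>0$, so the minimum can only occur at the endpoints. Your argument instead rests on the explicit expansion $\frac{\sin(nt)}{\sin t}=\sum_{k=0}^{n-1}\cos\big((n-1-2k)t\big)$, which turns $A_n$ into the sum of squares $2\sum_k\sin^2\big(\tfrac{(n-1-2k)t}{2}\big)$. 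This is essentially the ``expansion in cosines / Dirichlet kernel'' alternative that the author sketches in the remark immediately after the lemma, and it has a concrete advantage that the remark itself acknowledges: the sum-of-squares form simultaneously proves nonnegativity for all $t\in\SR$ (removable singularities included, with no L'Hospital step) and pins down the zero set exactly, since $A_n(t)=0$ forces every summand to vanish; your parity bookkeeping (the exponents include $\pm1$ for even $n$ and $\pm2$ for odd $n\geq3$) is the right way to close that case analysis. The paper's critical-point argument is shorter to state but localizes the zeros only implicitly, via the separate endpoint evaluations. Both proofs are complete; yours trades a geometric-series identity for the derivative computation and arguably gives more structural information.
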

\begin{proof}
The symmetry is fairly obvious. Due to it we may restrict our attention to the interval $[0,\pi]$. Using L'Hospital's rule we find that 
$$
A_{2k}(0) = A_{2k+1}(0) = A_{2k+1}(\pi) = 0, \qquad  A_{2k}(\pi) = 4k,
$$
for any $k\geq1$. Now, for $t\in(0,\pi)$, $A_n(t) >0$ is equivalent to 
$$
\vp(t) := n\sin t -\sin(nt) >0,  
$$
whose derivative is
$$
\vp'(t) = n(\cos t - \cos(nt)).
$$
If $t_0$ is a critical point of $\vp$ then $\sin t_0 = \pm \sin(nt_0)$. Hence 
$$
\vp(t_0) = (n\mp1) \sin t_0 >0  
$$
and the proof is complete. 
\end{proof}

\vskip0.2cm
We wish to remark that there are at least two more ways to prove this lemma. First, we could apply Dieudonn\'e's criterion to the univalent polynomial $z-z^n/n$ (which is, moreover, starlike \cite[Thm.~2.3]{Br}) and let $z\to 1$ along the real axis. Alternatively, for odd $n$ we could use the connection with the Dirichlet kernel 
$$
D_n(x) = \frac{\sin(n+1/2)x}{\sin x/2} = 1+2\sum_{j=1}^n \cos(jx),
$$
which is $A_{2k+1}(t) = 2k+1-D_k(2t)$ (see \cite[\S 8.4]{Du4}, for example). For even $n$ we would simply have to adjust the proof of the above expansion in cosines, where the trick with telescoping sums works equally well. However, we note that only the latter of these two proofs yields naturally the strict inequality in the open interval $(0,\pi)$. 

\begin{lemma} \label{lem+2}
For all integers $n\geq 2$ and for all $t\in (0,\pi)$ it holds that 
\be \label{ind}
\frac{A_n(t)}{n^3-n} \geq \frac{A_{n+2}(t)}{(n+2)^3-(n+2)}.
\ee
\end{lemma}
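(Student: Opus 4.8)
The plan is to collapse \eqref{ind} to a single sharp trigonometric inequality and then to prove that inequality by exploiting the monotonicity of $x\mapsto(\sin x/x)^2$ on exactly the range where \eqref{ind} is tight. First I would record the two-step recursion
\[
A_{n+2}(t)-A_n(t)=2-\frac{\sin((n+2)t)-\sin(nt)}{\sin t}=2-2\cos((n+1)t)=4\sin^2\!\Big(\tfrac{(n+1)t}{2}\Big),
\]
which follows from $\sin((n+2)t)-\sin(nt)=2\cos((n+1)t)\sin t$. Since $(n+2)^3-(n+2)=(n+1)(n+2)(n+3)$ and $n^3-n=(n-1)n(n+1)$, clearing denominators in \eqref{ind}, cancelling the common factor $n+1$, and cross-multiplying turns it into $(n+2)(n+3)A_n(t)\ge n(n-1)A_{n+2}(t)$. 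Substituting the recursion and using $(n+2)(n+3)-n(n-1)=6(n+1)$, this is equivalent to
\[
3(n+1)\,A_n(t)\ \ge\ 2n(n-1)\sin^2\!\Big(\tfrac{(n+1)t}{2}\Big),\qquad t\in(0,\pi),
\]
which I call $(\star)$. The whole problem is now $(\star)$; note that both sides vanish to second order as $t\to0$ with equal leading coefficients, so $(\star)$ is an equality at the origin and no soft estimate can establish it there.

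To handle the tight region I would use the representation $A_n(t)=4\sum_{j\in J_n}\sin^2(ju)$, where $u=t/2$ and $J_n=\{j:1\le j\le n-1,\ j\equiv n-1\ (\mathrm{mod}\ 2)\}$, obtained from the elementary identity $\frac{\sin nt}{\sin t}=\sum_{k=0}^{n-1}\cos((n-1-2k)t)$ and $1-\cos\theta=2\sin^2(\theta/2)$; a direct computation gives $\sum_{j\in J_n}j^2=\frac{n^3-n}{6}$. Writing $N=n+1$, inequality $(\star)$ becomes
\[
N^2\sum_{j\in J_n}\sin^2(ju)\ \ge\ \Big(\sum_{j\in J_n}j^2\Big)\sin^2(Nu),
\]
i.e. $\tfrac{\sin^2(Nu)}{N^2}$ is at most the $j^2$-weighted average of the numbers $\tfrac{\sin^2(ju)}{j^2}$, $j\in J_n$. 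On $t\in(0,\tfrac{2\pi}{N}]$ we have $ju\le(n-1)u<Nu\le\pi$ for every $j\in J_n$, and since $x\mapsto(\sin x/x)^2$ is decreasing on $(0,\pi]$ this gives $\frac{\sin^2(Nu)}{N^2}\le\frac{\sin^2(ju)}{j^2}$ termwise; multiplying by $j^2$ and summing proves $(\star)$ precisely where it is sharp.

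On the remaining range $t\in(\tfrac{2\pi}{N},\pi)$ there is room to spare, so I would bound $\sin^2(\tfrac{(n+1)t}{2})\le1$ and reduce $(\star)$ to showing $A_n(t)\ge\frac{2n(n-1)}{3(n+1)}$, equivalently $\frac{\sin nt}{\sin t}\le\frac{n(n+5)}{3(n+1)}$. For $t\in(\tfrac{2\pi}{N},\tfrac{\pi}{2}]$ this is immediate from $\sin t\ge\sin\tfrac{2\pi}{N}\ge\tfrac{4}{N}$ (Jordan's inequality, valid since $N\ge4$), which yields $A_n(t)\ge n-\tfrac{N}{4}=\tfrac{3n-1}{4}$, and one checks $\tfrac{3n-1}{4}\ge\frac{2n(n-1)}{3(n+1)}$. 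When $n$ is odd, $n+1$ is even and a short computation shows that both $A_n$ and $\sin^2(\tfrac{(n+1)t}{2})$ are symmetric about $t=\tfrac{\pi}{2}$, so $(\star)$ on $(0,\tfrac{\pi}{2}]$ already gives it on $[\tfrac{\pi}{2},\pi)$ and the odd case is complete.

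The even case is where the main obstacle lies. Here there is no symmetry about $\tfrac{\pi}{2}$; instead I would use $A_n(\pi-\tau)=2n-A_n(\tau)$ to transfer the range $[\tfrac{\pi}{2},\pi)$ to the estimate $\frac{\sin n\tau}{\sin\tau}\ge-\frac{n(n+5)}{3(n+1)}$ on $(0,\tfrac{\pi}{2}]$. Since $\sin\tau$ increases there while $|\sin n\tau|\le1$, the most negative value of $\frac{\sin n\tau}{\sin\tau}$ occurs on the first sign-change interval $\tau\in(\tfrac{\pi}{n},\tfrac{2\pi}{n})$, where the claim reduces to $\sin(\pi/n)\ge\frac{3(n+1)}{n(n+5)}$. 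This is the delicate point: the inequality holds for every even $n$, but only with a narrow margin as $n\to\infty$ (it is governed asymptotically by $\pi>3$), so it is the one place where the trigonometry must be carried out carefully rather than by crude bounds.
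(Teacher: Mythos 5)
Your argument is sound in every step that you actually carry out, and it takes a genuinely different route from the paper. The paper also sets $N=n+1$ and reduces \eqref{ind} to a single trigonometric inequality, but it then proceeds by a critical-point analysis: it introduces $\Phi(t)=2(N^2-1)\sin t-3N\sin(Nt)\cos t+(N^2+2)\cos(Nt)\sin t$, factors $\Phi'$, and verifies $\Phi\ge0$ at the endpoints and at every interior critical point, treating all $t$ and both parities at once. Your route instead isolates the sharp regime explicitly: the two-step recursion $A_{n+2}-A_n=4\sin^2\big(\tfrac{(n+1)t}{2}\big)$ reduces \eqref{ind} to $3(n+1)A_n(t)\ge 2n(n-1)\sin^2\big(\tfrac{(n+1)t}{2}\big)$, and the Fej\'er-type expansion $A_n(t)=4\sum_{j\in J_n}\sin^2(jt/2)$ with $\sum_{j\in J_n}j^2=\tfrac{n^3-n}{6}$ turns this, on $(0,\tfrac{2\pi}{N}]$, into the statement that $\sin^2(Nu)/N^2$ is dominated by a $j^2$-weighted average of $\sin^2(ju)/j^2$, which follows from the monotonicity of $(\sin x/x)^2$ on $(0,\pi]$. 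This is a clean, computation-free treatment of exactly the region where equality is approached (and, via the $t\mapsto\pi-t$ symmetry, of the second tight point for odd $n$). What the paper's method buys is uniformity and no case analysis; what yours buys is transparency about where and why the inequality is tight.

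There is, however, one genuine loose end: the even case of your proof ultimately rests on the inequality $\sin(\pi/n)\ge\frac{3(n+1)}{n(n+5)}$, which you assert and label ``delicate'' but never prove, so as written the lemma is not established for even $n$. The inequality is true and closes with the standard bound $\sin x\ge x-\tfrac{x^3}{6}$: taking $x=\pi/n$, the claim reduces to $\pi(n+5)\big(1-\tfrac{\pi^2}{6n^2}\big)\ge 3(n+1)$, i.e.\ $(\pi-3)n+(5\pi-3)\ge\tfrac{\pi^3(n+5)}{6n^2}$, and the right-hand side is decreasing in $n$ with value $\tfrac{7\pi^3}{24}<9.1$ at $n=2$, while the left-hand side already exceeds $5\pi-3>12.7$. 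That one line should be added. Two smaller points deserve a word as well: the Jordan-inequality step $\sin(2\pi/N)\ge 4/N$ requires $N\ge4$, which fails for $n=2$, but there the interval $(2\pi/N,\pi/2]$ is empty so nothing is lost; and the claim that the worst case of $\sin(n\tau)/\sin\tau$ occurs on the first sign-change interval should be replaced by the cleaner observation that $\sin\tau\ge\sin(\pi/n)$ on all of $[\pi/n,\pi/2]$, which is what your bound actually uses.
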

\vskip0.2cm
\begin{proof}
We set $N=n+1\geq3$ and see that \eqref{ind} is equivalent to 
$$
N(N+1)(N+2)A_{N-1}(t) \, \geq \, N(N-1)(N-2)A_{N+1}(t),
$$
which, in turn, is equivalent to
$$
4(N^2-1) -(N+1)(N+2)\frac{\sin(N-1)t}{\sin t} +(N-1)(N-2)\frac{\sin(N+1)t}{\sin t} \geq 0.
$$
Multiplying by $\frac{1}{2}\sin t$, expanding the sines of the sums and setting
\be \label{f}
\Phi(t) \, = \, 2(N^2-1)\sin t -3N \sin(Nt) \cos t +(N^2+2) \cos(Nt) \sin t,
\ee
we see that the above is equivalent to $\Phi(t)\geq 0$. We note that 
$$
\Phi\left(\frac{\pi}{2}\right) = 2N^2-2 +(N^2+2) \cos\left(\frac{N\pi}{2}\right) \geq N^2-4 >0,
$$
for shortly we will need to consider $t\neq \frac{\pi}{2}$. We compute
\begin{align} \label{f'}
\frac{\Phi'(t)}{N^2-1} \, & = \, 2\cos t -2\cos(Nt) \cos t -N \sin(Nt) \sin t \nonumber \\
& = \, 2 \sin\left(\frac{Nt}{2}\right) \left( 2 \sin\left(\frac{Nt}{2}\right) \cos t  -N \cos\left(\frac{Nt}{2}\right) \sin t  \right). 
\end{align}
Hence, one set of the roots of $\Phi'$ comes from $\sin\left(\frac{Nt}{2}\right) = 0$. Solutions of this equation satisfy $Nt_k = 2k\pi, k\in\SZ$, and it is easy to check that
$$
\Phi(t_k) = 3N^2 \sin t_k > 0.
$$
The rest of the roots of $\Phi'$ comes from 
\be \label{tan}
\tan\left(\frac{Nt}{2}\right) = \frac{N}{2} \tan t,
\ee
if we momentarily consider that $\cos\left(\frac{Nt}{2}\right) \neq 0$. We return to \eqref{f} and compute 
$$
\Phi(t) \, = \, (N^2-4)\sin t + 2 \cos^2\left(\frac{Nt}{2}\right) \sin t \left( N^2+2 -3N\frac{\tan\left(\frac{Nt}{2}\right)}{\tan t} \right).
$$
Hence, if $t^*$ satisfies \eqref{tan} then 
$$
\Phi(t^*) \, = \, (N^2-4)\sin t^* \left( 1-  \cos^2\left(\frac{Nt^*}{2}\right) \right) \geq 0, 
$$
which was our goal. Therefore, it is only left to consider the case when $\cos\left(\frac{Nt}{2}\right) = 0$ for some critical point of $\Phi$. But this would give $Nt=(2k+1)\pi,k\in\SZ$ and a substitution in \eqref{f'} yields
$$
\frac{\Phi'(t)}{N^2-1} = 4 \cos t,
$$
which vanishes only at $t=\frac{\pi}{2}$, a point we have previously considered. 
\end{proof}

\vskip0.3cm
\section{Proof of Theorem \ref{Bmn}}
We now proceed with the proof of our main theorem. 

\begin{proof}[Proof of Theorem \ref{Bmn}]
We set 
$$
\vp_{mn}(t) := \frac{n\sin t -\sin(nt)}{m\sin t -\sin(mt)} = \frac{A_n(t)}{A_m(t)}, \quad t\in[0,2\pi],
$$
whose minimum is the number $B_{mn}$. In view of the symmetry of $A_n$ (stated in Lemma~\ref{lemA}) we may restrict our attention to $t$ in $[0,\pi]$.

\vskip0.1cm
Suppose first that either the hypothesis (a) or (b) holds, that is, $m$ and $n$ are simultaneously odd or even. Note that 
$$
\vp_{mn}(0) = \vp_{mn}(\pi) = \frac{n^3-n}{m^3-m} \qquad \text{for odd} \;\; m,n
$$
and that 
$$
\vp_{mn}(0) = \frac{n^3-n}{m^3-m} < \frac{n}{m} = \vp_{mn}(\pi) \qquad \text{for even} \;\; m,n.
$$
Hence, our goal is to show that 
$$
\frac{A_n(t)}{A_m(t)} \geq \frac{n^3-n}{m^3-m} \qquad \text{for} \;\; t\in (0,\pi).
$$
But this follows directly from Lemma \ref{lem+2} after a finite number of iterations 
$$
\frac{A_n(t)}{n^3-n} \geq \frac{A_{n+2}(t)}{(n+2)^3-(n+2)} \geq \frac{A_{n+4}(t)}{(n+4)^3-(n+4)} \geq \cdots \geq \frac{A_m(t)}{m^3-m}.
$$

\vskip0.1cm
Suppose now that the hypothesis (c) holds, that is, $m$ is odd, $n$ is even and $n\leq \frac{m+1}{2}$. Note that 
$$
\vp_{mn}(0) = \frac{n^3-n}{m^3-m} < +\infty = \vp_{mn}(\pi).
$$
Once again, in view of Lemma \ref{lem+2} it suffices to prove that 
$$
\frac{A_n(t)}{A_{m_0}(t)} \geq \frac{n^3-n}{m_0^3-m_0} \qquad \text{for} \;\; t\in (0,\pi),
$$
where $m_0=2n-1$. This is equivalent to 
$$
4(2n-1)A_n(t) \geq (n+1)A_{2n-1}(t),
$$
which, in turn, is the same as 
\be \label{pos}
1- \frac{4}{3n-1} \frac{\sin(nt)}{\sin t} +\frac{n+1}{(2n-1)(3n-1)}\frac{\sin\big((2n-1)t\big)}{\sin t} \geq 0. 
\ee
It would clearly suffice to prove that 
\be \label{n-neq}
1- \frac{4}{3n-1} \frac{\sin(nt)}{\sin t} z^{n-1} +\frac{n+1}{(2n-1)(3n-1)}\frac{\sin\big((2n-1)t\big)}{\sin t}z^{2n-2}  \neq 0, 
\ee
for all $z\in\SD$, since this would imply that for $z=x\in [0,1)$ the function in \eqref{n-neq} is positive and \eqref{pos} would follow after letting $x\to 1^-$. In view of Dieudonn\'e's criterion (Lemma \ref{Dieu}), \eqref{n-neq} is equivalent to the statement that the function
\be \label{Bran}
f(z) = z -\frac{4}{3n-1} z^n +\frac{n+1}{(2n-1)(3n-1)} z^{2n-1}
\ee
belongs to the class $S$. We will actually prove more: we will show that $f$ is starlike, which means that $f$ is univalent and that for every $w\in f(\SD)$ the line segment $[0,w]$ lies entirely in $f(\SD)$.

First, we see that the roots of 
$$
\frac{f(z)}{z} = 1 -\frac{4}{3n-1} z^{n-1} +\frac{n+1}{(2n-1)(3n-1)} z^{2n-2}
$$
satisfy 
$$
z^{n-1} = \frac{2(2n-1)\pm i(n-1)\sqrt{3(2n-1)}}{n+1},
$$
and therefore
$$
|z|^{2n-2} = \frac{(2n-1)(3n^2+2n-1)}{(n+1)^2} > 1.
$$
This shows that the function 
$$
p(z) = \frac{zf'(z)}{f(z)}
$$ 
is analytic in $\overline{\SD}$ and so in order to apply the well-known criterion for starlikeness \cite[\S 2.5]{Du2} it suffices to show that 
\be \label{p}
{\rm Re\,} p(z) \geq 0, \quad \text{for} \;\; \; |z|=1.
\ee
We compute 
$$
\frac{p(z)}{2n-1} \, = \, \frac{(n+1)z^{2n-2} -4nz^{n-1}+3n-1}{(n+1)z^{2n-2} -4(2n-1)z^{n-1}+(2n-1)(3n-1)}
$$
and let $z^{n-1} = e^{i\t}, \, \t\in\SR$. We then have
\begin{align*}
\frac{p(z)}{2n-1} \, & = \, \frac{(n+1) e^{i\t} -4n +(3n-1)e^{-i\t}}{(n+1) e^{i\t} -4(2n-1) +(2n-1)(3n-1)e^{-i\t}} \\
& = \, \frac{2n(\cos\t-1)-(n-1)i\sin\t}{(3n^2-2n+1)\cos\t-2(2n-1)-3n(n-1)i\sin\t}.
\end{align*}
Multiplying by the complex conjugate of the denominator we see that \eqref{p} is equivalent to 
\begin{align*}
0\leq & \, 2n (\cos\t-1) \big[(3n^2-2n+1)\cos\t-2(2n-1)\big] +3n(n-1)^2 \sin^2\t \\
= & \, n (n+1) (3n-1) (\cos\t-1)^2,
\end{align*}
which is true. The proof is complete.  
\end{proof}

Note that the polynomial \eqref{Bran} resembles the polynomials considered in a theorem of Brannan \cite[Thm.~3.1]{Br}, which gave necessary and sufficient conditions for a polynomial of the form 
$$
z \,+ \, a \, z^{n} + \frac{z^{2n-1}}{2n-1},  \quad a\in\SC,
$$
to be univalent. Even though this theorem can not be applied here, the main ingredient in its proof, which is the \emph{Cohn rule} (see \cite[Lem.~1.2]{Br}), could be directly applied to prove \eqref{n-neq} and thus give an alternative ending of the proof of Theorem~\ref{Bmn}.

\section{Appendix: Calculation of $q_n$}

Here our starting point will be Bombieri's formula (4.1) in \cite{Bom}. According to it, if $\phi$ is a function in $L^2[0,1]$ then a second variation of the Koebe function is given by $q(z) = Q\big(K(z)\big)$, where
\be \label{Bom-2nd}
Q(w) \, = \, - w^2 \int_0^1 \frac{\phi(u)^2}{U} du \, - 2w^3 \int_0^1  \int_0^u \left(3+\frac{1}{V}\right) \frac{\phi(u)\phi(v)}{\sqrt{UV}} dv du,
\ee
$U=1+4uw$ and $V=1+4vw$. Note the following homogeneity property: if we replace $\phi$ by $c \, \phi$ ($c\in\SR$) then instead of $Q$ we obtain $c^2Q$. In fact, our aim here is to show how a specific choice of $\phi$ yields 
$$
q_n = -\frac{1}{9}(n-1)(2n^2 -4n+3),
$$
which is a scalar multiple of \eqref{q_n}. We will provide a slightly more direct approach than Leung who, for additional purposes, considers \eqref{Bom-2nd} with variable $z\in\SD$ and integration over the interval $[-1,1]$ in order to use properties of classical orthogonal polynomials. 

We rewrite \eqref{Bom-2nd} as 
\begin{align*}
Q(w)   \, = \, & - w^2 \int_0^1 \frac{\phi(u)^2}{1+4uw} du \\
& -6 w^3 \int_0^1  \int_0^u  \frac{\phi(u)\phi(v)}{\sqrt{1+4uw}\sqrt{1+4vw}}dv du \\
& -2 w^3 \int_0^1  \int_0^u  \frac{\phi(u)\phi(v)}{\sqrt{1+4uw}(1+4vw)^{3/2}}dv du
\end{align*}
and denote by $I_1, I_2$ and $I_3$ the three integrals in the order appearance, so that 
$$
Q(w) \, = \, -w^2 (I_1 + 6w I_2 + 2w I_3).
$$
We observe that the integrand in $I_2$ is symmetric in $u$ and $v$ and therefore its integral over the lower triangle of $[0,1]^2$ (which is $I_2$) is equal to the integral over the upper triangle. Hence 
$$
I_2 \, = \, \frac{1}{2} \left( \int_0^1 \frac{\phi(u)}{\sqrt{1+4uw} }du \right)^2. 
$$
To deal with $I_3$ we note that 
$$
\frac{2w}{(1+4vw)^{3/2} } \, = \, - \frac{\partial}{\partial v }  \left( \frac{1}{\sqrt{1+4vw}} \right).
$$
An integration by parts now yields
\begin{align*}
2w I_3  \,= &\, - \int_0^1 \frac{\phi(u)^2}{1+4uw} du + \phi(0) \int_0^1 \frac{\phi(u)}{\sqrt{1+4uw} }du \\
& + \int_0^1  \int_0^u  \frac{\phi(u)\phi'(v)}{\sqrt{1+4uw}\sqrt{1+4vw}}dv du. 
\end{align*}
In total, we have
\begin{align} \label{Q(w)}
Q(w)   \, = \, & -w^2 \phi(0) \int_0^1 \frac{\phi(u)}{\sqrt{1+4uw} }du \, -3w^3 \left( \int_0^1 \frac{\phi(u)}{\sqrt{1+4uw} }du \right)^2 \\ 
& - w^2 \int_0^1  \int_0^u  \frac{\phi(u)\phi'(v)}{\sqrt{1+4uw}\sqrt{1+4vw}}dv du. \nonumber
\end{align}
We now choose $\phi(u) = 1-u$. It is helpful to compute
$$
\int_0^u  \frac{dv}{\sqrt{1+4vw}}  =  \frac{\sqrt{1+4uw}-1}{2w}
$$
and (integrating by parts): 
$$
\int_0^1 \frac{u \;du }{\sqrt{1+4uw}} =  \frac{\sqrt{1+4w}}{2w} - \frac{(1+4w)^{3/2}-1}{12w^2}.
$$
Then we can compute the integrals in \eqref{Q(w)}. They are
$$
\int_0^1 \frac{\phi(u)}{\sqrt{1+4uw} }du = \frac{(1+4w)^{3/2}-6w-1}{12w^2}
$$
and 
$$
\int_0^1  \int_0^u  \frac{\phi(u)\phi'(v)}{\sqrt{1+4uw}\sqrt{1+4vw}}dv du = \frac{(1+4w)^{3/2}-6w^2-6w-1}{24w^3}.
$$
We substitute these in \eqref{Q(w)} and after elementary but cumbersome calculations we obtain
$$
Q(w)  = \frac{1+4w}{6} \left( \sqrt{1+4w}  -1 -2w \right). 
$$
Setting $w=K(z) = \frac{z}{(1-z)^2}$ we get
$$
q(z) = Q\big(K(z)\big) = -\frac{z^2(1+z)^2}{3(1-z)^4}.
$$
Finally, we compute the $n$-th coefficient of $q$ with the aid of the standard formula 
$$
\frac{1}{(1-z)^4} = \sum_{n=0}^\infty \frac{(n+1)(n+2)(n+3)}{6} z^n.
$$ 

\vspace{0,2cm}
\emph{Acknowledgements}. The author has been supported by a fellowship of the International Excellence Graduate Program in Mathematics at Universidad Aut\'onoma de Madrid (422Q101) and also partially supported by grant MTM2015-65792-P by MINECO/FEDER-EU. This work forms part of his Ph.D. thesis at UAM under the supervision of professor Dragan Vukoti\'c. The author would like to thank him for his encouragement and help. 

The author would also like to thank professor Yuk J. Leung for providing him with a copy of \cite{Le} and suggesting that formula \eqref{B} should be true under the hypothesis (a) and (b) of Theorem \ref{Bmn}.

\end{document}